\newtheorem{theorem}{Theorem}[section]
\newtheorem{lemma}[theorem]{Lemma}
\newtheorem{proposition}[theorem]{Proposition}
\newtheorem{corollary}[theorem]{Corollary}	
\newtheorem{problem}[theorem]{Problem}
\newtheorem{assumption}[theorem]{Assumption}
\newcommand{\ke}[1]{{\color{black} #1}}
\title{\LARGE \bf
Neural ODE Control for Trajectory Approximation\\ of Continuity Equation
}
\author{Karthik Elamvazhuthi, Bahman Gharesifard, Andrea Bertozzi, Stanley Osher% <-this % stops a space
\thanks{*This work was supported by National Science Foundation (NSF) award DMS-1952339 and AFOSR grants FA9550-18-1-0167 and FA9550-18-1-0502.
}% <-this % stops a space
\thanks{$^{1}$Karthik Elamvazhuti is with the Department of Mechanical Engineering, University of California, Riverside, CA 92521, USA {\tt\small kelamvazhuthu@engr.ucr.edu}.
        }%
\thanks{$^{2}$Andrea Bertozzi and Stanley Osher are with the Department of Mathematics, University of California, Los Angeles, CA 90095, USA {\tt\small \{karthikevaz,bertozzi,sjo\}@math.ucla.edu}.
        }%
\thanks{$^{3}$Bahman Gharesifard is with the Department of Electrical Engineering, University of California, Los Angeles, CA 90095, USA
        {\tt\small gharesifard@ucla.edu}}%
}
\begin{document}

\maketitle
\thispagestyle{empty}
\pagestyle{empty}

%%%%%%%%%%%%%%%%%%%%%%%%%%%%%%%%%%%%%%%%%%%%%%%%%%%%%%%%%%%%%%%%%%%%%%%%%%%%%%%%
\begin{abstract}
% We consider the controllability properties of neural ODEs. A number of recent results have considered the problem of approximation power of deep neural networks by considering the controllability properties of the continuum limit, referred to as Neural ordinary differential equations. 
We consider the controllability problem for the continuity equation, 
corresponding to neural ordinary differential equations (ODEs), which describes how a probability measure is pushedforward by the flow. We show that the controlled continuity equation has very strong controllability properties. Particularly, a given solution of the continuity equation corresponding to a bounded Lipschitz vector field defines
a trajectory on the set of probability measures. For this trajectory, we show that there exist piecewise constant training weights for a neural ODE such that the solution of the continuity equation corresponding to the neural ODE is arbitrarily close to it. As a corollary to this result, we establish that the continuity equation of the neural ODE is approximately controllable on the set of compactly supported probability measures that are absolutely continuous with respect to the Lebesgue measure. 
\end{abstract}

%%%%%%%%%%%%%%%%%%%%%%%%%%%%%%%%%%%%%%%%%%%%%%%%%%%%%%%%%%%%%%%%%%%%%%%%%%%%%%%%
\section{INTRODUCTION}

In recent years, there has been a considerable amount
of work on deep neural networks, due to the flexibility they provide for training purposes. 
Continuum limits of such neural networks has lead to a wide literature on the so-called
Neural ordinary differential equations (ODEs)~\cite{weinan2017proposal,haber2017stable,chen2018neural}, with a major upshot being that tools from dynamical systems and control theory can be used to understand and develop methods to train and synthesize neural networks. This includes extensions to stochastic settings \cite{tzen2019neural,wang2019resnets}, and higher order dynamical variants \cite{sander2021momentum}.

Control-theoretic tools have recently been utilized to address questions related to training neural networks. 
% These continuum limits have proved fruitful in understanding the approximation properties of deep neural networks using controllability analysis of the resulting control system. 
In \cite{tabuada2020universal}, the authors use differential geometric techniques to establish the controllability properties of the underlying neural ODE and leverage that to obtain uniform approximation results. Controllability properties of neural ODEs on the group of diffeomorphisms has also been investigated in \cite{agrachev2021control}. In addition to this, optimal control theory has been leveraged to train neural networks, and this direction has been an original motivation for neural ODEs, see~\cite{chen2018neural}, and also~\cite{jabir2019mean,benning2019deep,li2018maximum}.

% system of controlled ordinary differential equations (ODEs). 
% to gain insight about their approximability properties.
% An instance of such a continuum limit is when
% one considers deep neural networks with infinite number of
% layers 

% In this setting, a deep neural network with a finite number of layers is seen as a forward Euler discretization of the limiting case, which is an ODE. 

% Such ideas have been proposed in. 

An interesting property that is the focal part of this paper is that 
 data classification and universal approximation capabilities of 
neural ODEs can be related to problems arising in optimal transportation theory~\cite{ambrosio2008gradient},
where one aims to take a given probability density to
another. Such transport problems also naturally arise for density estimation problems in machine learning, such as normalizing flows \cite{kobyzev2020normalizing}. This insight has been used to leverage
tools from optimal transportation theory \cite{ambrosio2008gradient} to find numerically efficient
methods to train neural ODEs \cite{finlay2020train}. The resulting transport problems can be analyzed in terms of a controlled continuity equation, which describes how a probability density evolves under the action of flow of a differential equation. This motivates us to study the approximation capabilities of neural ODEs for density estimation, by studying the control properties of the corresponding continuity equation for which the vector-field is given by that of a neural ODE. Most closely related to our work, the authors in~\cite{ruiz2021neural}
% consider such a controllability problem for the continuity equation. They 
establish approximate controllability of the underlying controlled continuity equation on the space of probability measures. Particularly, given a initial and final measure, it is shown that there exist weight parameters of the neural ODE that can be chosen such that the final condition of the continuity equation is arbitrarily close to the target probability measure in the Wasserstein-$1$ distance. 

\emph{Statement of Contributions:} We study the approximation capabilities of
the continuity equation corresponding to a neural ODE. We show one
can construct a sequence of control inputs such that the solutions of the continuity equation corresponding to the neural ODE  uniformly converge to  the solution of the continuity
equation corresponding to any given Lipschitz vector field. This controllability property of the system is challenging to attain due to the low number of control parameters at hand, attributed to the limited width of the neural network. In general, it is not possible to select the control parameters in a way that the approximating vector fields are \emph{strongly} converging to the original one. The key idea behind our result is the observation that one can instead construct admissible vector fields that are weakly converging to the original one, and more importantly, that this is sufficient to achieve uniform convergence of the curves on the set of probability measures. 
% \margin{More references would be helpful, both on neural ODEs and applications of optimal transportation in ML}
% The paper is organized in the following way. In  Section~\ref{sec:not}, we fix some notation and mathematical preliminaries. In Section~\ref{sec:prob}, we formulate the problem addressed in this paper. In Section~\ref{sec:prob}, we present the controllability analysis. 

\section{Problem Formulation and Motivation}
\label{sec:prob}
Let $\sigma :\mathbb{R} \rightarrow \mathbb{R}$ be a given {\it activation function}. 
\ke{We define the map $\Sigma : \mathbb{R}^d \rightarrow \mathbb{R}^d$ by
\[\Sigma(x) = [\sigma(x_1),...,\sigma(x_d)]^T.\]}
An example of the class of activation functions that we consider is {\it sigmoidal functions} with globally bounded derivatives. An activation function $\sigma$ is  said to be {\it sigmoidal} if its range lies in $[0,1]$, 
\[
\lim_{x \rightarrow -\infty} \sigma(x)=0 \ \mathrm{and} \ \lim_{x \rightarrow \infty} \sigma(x)=1.
\]
One such sigmoidal function is 
\begin{equation}
\label{eq:log}
    \sigma(x) = \frac{1}{1+e^{-x}}.
\end{equation}
Another important example of an activation function is the {\it Rectified Linear Unit (ReLU)}  function defined by
\begin{equation}
\label{eq:relu}
    \sigma (x) = \begin{cases}
    x & ~~x>0, \\
    0 & {\rm otherwise}.
    \end{cases}
\end{equation}

We consider the neural ODE given by 
\begin{eqnarray}
\dot{x}(t) = A(t)\Sigma(W(t)x+\theta(t)),
\label{eq:node}
\end{eqnarray}
where $A: [0,T] \rightarrow \mathbb{R}^{d \times d}$, $W: [0,T] \rightarrow  \mathbb{R}^d$ and  $\theta : [0,T] \rightarrow \mathbb{R}^d$ are the control inputs or weights for the neural network. See \cite{chen2018neural,haber2017stable,tabuada2020universal,ruiz2021neural} for a discussion on the relation between the above ODE and deep residual neural networks.
%This limit ODE is known as a {\it Neural ordinary differential equation}.

Suppose that the initial condition $x(0)$ of~\eqref{eq:node} is chosen at random from a distribution with a probability density function $\rho_0$.
The uncertainty in the state $x(t)$ is determined by the time-dependent probability density $\rho(t)$ which evolves according to the continuity equation,
\begin{eqnarray}
\label{eq:neurtra}
\frac{\partial \rho}{\partial t} + \nabla \cdot \Big( \big(A(t)\Sigma(W(t)x+\theta(t))\big)\rho \Big)=0, \\ \nonumber
\rho(0)= \rho_0.
\end{eqnarray}
In density estimation problems such as the ones considered in~\cite{chen2018neural,finlay2020train}, the goal is to construct weight functions (or control inputs) $A(\cdot), W(\cdot), \theta(\cdot)$ so that the endpoint of the solution $\rho(T)$ of \eqref{eq:neurtra} is approximately equal to an unknown probability distribution $\rho^f$, using available samples of $\rho^f$. From a control-theoretic point of view, it is natural to ask for which class of target distributions $\rho^f$, solutions of \eqref{eq:neurtra} can be controlled to $\rho^f$ within final time $T$. This problem has been recently considered in \cite{ruiz2021neural}, for the special case of ReLU activation functions and $d \geq 2$, where it has been shown that $\rho(T)$ can be made arbitrarily close to any given compactly supported $\rho^f$ in the {\it Wasserstein $1$-metric}. The purpose of this short paper is to consider the more general {\it trajectory approximation} problem stated below.
%%%
\begin{problem}
Given a curve on the set of probability densities $t  \mapsto \tilde{\rho}(t)$, can we construct control inputs $A(\cdot),W(\cdot), \theta(\cdot)$ such that the solution of \eqref{eq:neurtra} is arbitrary close to $\tilde{\rho}(t)$ in a suitable sense, for all $t \in [0,T]$?
\end{problem}

We answer this problem affirmatively in Theorem \ref{thm:MR} for the case when the curve $ t  \mapsto \tilde{\rho}(t)$ is the pushforward of the flow of a uniformly Lipschitz bounded vector field. Then we show that the controllability result in \cite{ruiz2021neural} can be derived as a Corollary to our main result, for general activation functions that satisfy Assumption~\ref{asmp:neura}. \ke{Few motivations for considering the more general trajectory approximation problem include interpolation of data lying on the set of probability measures \cite{chen2018measure}, identifying dynamical systems from population data \cite{weinreb2018fundamental}, and control of large swarms \cite{elamvazhuthi2019mean}.}

\section{Notation and Preliminaries}
\label{sec:not}

In this section, we define some notation that will be used throughout the paper. We refer the readers to~\cite{ambrosio2008gradient} for more details.
Let $\mathcal{P}_2(\mathbb{R}^d)$ denote the set of Borel probability measures on $\mathbb{R}^d$ with finite second moment: $\int_{\Omega} |x|^2d\mu(x)~<~\infty$. For a given Borel map $T : \mathbb{R}^d \rightarrow  \mathbb{R}^d$ we will denote by \ke{$T_{\#}$ the corresponding pushforward map, which maps any measure $\mu $ to a measure $T_{\#}\mu$, where $T_{\#}\mu$ is the measure defined by
\begin{equation}
    (T_\# \mu)(B) = \mu(T^{-1}(B)), 
\end{equation}
for all Borel measurable sets $B \subseteq \mathbb{R}^d$.} For $\mu,\nu \in \mathcal{P}_2(\mathbb{R}^d)$, we denote the set of transport plans from $\mu$ to $\nu$ by
\begin{equation}
    \Gamma(\mu,\nu):=\{\gamma \in \mathcal{P}(\mathbb{R}^d \times \mathbb{R}^d) | \pi^1_{\#}\gamma = \mu, \pi^2_{\#}\gamma = \nu \},
\end{equation}
where $\pi^i:\mathbb{R}^d \times \mathbb{R}^d \rightarrow \mathbb{R}^d$ are the projections on to the $i$th coordinates, respectively.
We will define the $2-$Wasserstein distance between two probability measures $\mu,\nu$
as the following 
\begin{equation}
    W_2(\mu,\nu) = \min_{\gamma \in \Gamma(\mu,\nu)} \Bigg (\int_{\mathbb{R}^d \times \mathbb{R}^d} |x-y|^2d\gamma(x,y)\Bigg )^{1/2}.
\end{equation}
The set $C([0,T],\mathcal{P}_2(\mathbb{R}^d))$ will refer to the set of continuous curves $t \mapsto \mu_t$ in $\mathcal{P}_2(\mathbb{R}^d)$ with respect to the topology induced by the $2-$Wasserstein distance. We will say that a sequence $\{\mu^N\}_{N \in \mathbb{Z}_+} $ in $C([0,T];\mathcal{P}_2(\mathbb{R}^d))$ converges to $\mu \in C([0,T];\mathcal{P}_2(\mathbb{R}^d))$ if $\lim_{N \rightarrow \infty} \sup_{t\in [0,T]}W_2(\mu^N_t,\mu_t) =0$. Given a vector-field $V : [0,T] \times \mathbb{R}^d \rightarrow \mathbb{R}^d$,  
we will consider solutions $\mu$ of the continuity equation, 
\begin{eqnarray}
\label{eq:cteq}
\frac{\partial \mu}{\partial t} + \nabla \cdot \big(V_t(x) \mu \big)=0,  \\ \nonumber
\mu(0)= \mu_0.
\end{eqnarray}
Naturally, we say that $\mu \in C([0,T],\mathcal{P}_2(\mathbb{R}^d))$ is a weak solution, \ke{or a {\it solution in the sense of distributions} \cite[Section 8.1]{ambrosio2008gradient}}
of the continuity equation \eqref{eq:cteq} if 
\begin{eqnarray}
\int_0^T \int_{\mathbb{R}^d} \bigg (\frac{\partial \phi(t,x)} {\partial t} + \nabla \phi (t,x) \cdot V_t(x) \bigg) d \mu_t(x) dt  \nonumber \\
= -\int_{\mathbb{R}^d} \phi(0,x)d\mu_0(x),
\end{eqnarray}
for all compactly supported real-valued functions $\phi \in C^\infty([0,T) \times \mathbb{R}^d)$.
We make the following assumption.
\begin{assumption}
The vector field $V:[0,T] \times \mathbb{R}^d \rightarrow \mathbb{R}^d$ is such that $t \mapsto V_t(x)$ is measurable for every $x \in \mathbb{R}^d$ and it is uniformly Lipschitz in $x$. That is, there exists $K>0$ such that
\[|V_t(x) -V_t(y)| \leq K|x-y|,\]
for all $x,y \in \mathbb{R}^d$ and all $t \in [0,T]$.
\label{asmp:vec}
\end{assumption}
In addition to this, we will need some mild assumptions on the activation function $\sigma:\mathbb{R}\rightarrow \mathbb{R}$.
For this purpose, let us define the set of functions  
  \begin{align*}
    \mathcal{F}=\bigcup_{m\in \mathbb{Z}_+}\{ \sum_{i=1}^m \alpha_i \sigma(w_i^Tx+\theta_i) \ | \ \alpha_i \in \mathbb{R}, w_i \in \mathbb{R}^d, \theta_i \in \mathbb{R}\}.
%     \mathcal{F}_1 := \bigcup_{M\in \mathbb{Z}_+}\{ \sum_{i=1}^M \alpha_i \sigma(w^T_ix+\theta_i);
% \alpha_i ,  \theta_i \in \mathbb{R}, w_i \in \mathbb{R}^d\}
    \end{align*}
    \ke{Note that the set $\mathcal{F}$ is the set of arbitrarily wide single-hidden layer neural networks.}
\begin{assumption}
\label{asmp:neura}
We make the following assumptions:
\begin{enumerate}
    \item \textbf{(Regularity)} The activation function $\sigma$ is globally Lipschitz, that is, there exists $K>0$ such that 
    \begin{equation}
    |\sigma (x) - \sigma (y)| \leq K|x-y|, 
     \label{asmp:neura1}
    \end{equation}
     for all $x,y\in \mathbb{R}$.
    \item \textbf{(Density of superpositions)} The set of functions  $ \mathcal{F}$
%     \begin{align*}
%     \mathcal{F}_1 := \bigcup_{M\in \mathbb{Z}_+}\{ \sum_{i=1}^M \alpha_i \sigma(w^T_ix+\theta_i);
% \alpha_i ,  \theta_i \in \mathbb{R}, w_i \in \mathbb{R}^d\}
%     \end{align*}
    is dense in $C(\mathbb{R}^d;\mathbb{R})$ in the uniform norm topology on compact sets. Particularly, given a function $f \in C(\mathbb{R}^d;\mathbb{R})$, for each compact set $\Omega \subset \mathbb{R}$ and $\delta>0$, there exists a function $g \in \mathcal{F}$ such that 
    \[\sup_{x\in \Omega} |f(x) -g(x)| <\delta.\]     \label{asmp:neura2}
\end{enumerate}
\end{assumption} 
It is well-known that the Logistic function \eqref{eq:log} and the ReLU function \eqref{eq:relu} satisfy the density property, see~\cite{cybenko1989approximation,leshno1993multilayer}.
% \margin{I am a bit confused here. I am not sure these references tackle the bounded width assumption, i.e., deep neural networks, right?\ke{They address the shallow but with arbitrarily large width case, which is what we need in the assumption).}}
Given Assumption \ref{asmp:neura}, it is easy to see that the subset of vector-valued functions $ \mathcal{F}_d$ defined by
    \begin{align*}
    \mathcal{F}_d=\bigcup_{m\in \mathbb{Z}_+}\{ \sum_{i=1}^m A_i \Sigma(W_ix+\theta_i) \ | \ A_i , W_i \in \mathbb{R}^{d\times d}, \theta_i \in \mathbb{R}^d\},
    \end{align*}
is dense in $C(\mathbb{R}^d;\mathbb{R}^d)$ in the uniform norm topology on compact sets. 
% \ke{\begin{align*}
%     \mathcal{F}_d=\bigcup_{M\in \mathbb{Z}_+}\{ \sum_{i=1}^M A_i \Sigma(W_ix+\theta_i) \ | \ A_i , W_i \in \mathbb{R}^{d\times d}, \theta_i \in \mathbb{R}^d\}.
% %     \mathcal{F}_1 := \bigcup_{M\in \mathbb{Z}_+}\{ \sum_{i=1}^M \alpha_i \sigma(w^T_ix+\theta_i);
% % \alpha_i ,  \theta_i \in \mathbb{R}, w_i \in \mathbb{R}^d\}
%     \end{align*}
%     }

\section{Analysis}
\label{sec:ana}
In this section, we perform our controllability analysis. We show that given  a solution of the continuity equation \eqref{eq:cteq}, we can approximate the solution arbitrarily well using solutions of the equation \eqref{eq:neurtra}. 
\begin{theorem}
\label{thm:MR}
\textbf{(Main Result)}
Suppose that Assumptions~\ref{asmp:neura} and~\ref{asmp:vec} hold and $\mu_0 \in \mathcal{P}_2(\mathbb{R}^d)$ has compact support.  Let $\mu$ be the weak solution of the continuity equation~\eqref{eq:cteq} corresponding to the vector field $V$. Additionally, suppose that $V$ is uniformly bounded in space and time.
Then  for every $\epsilon >0$, there exist  piecewise constant control inputs $A^\epsilon(\cdot), W^{\epsilon}(\cdot)$ and $\theta^\epsilon(\cdot)$, such that the corresponding weak solutions $\mu^\epsilon$ of~\eqref{eq:neurtra} satisfy
\begin{equation}
    \sup_{t \in [0,T]} W_2(\mu_t^\epsilon,\mu_t) \leq \epsilon.
\end{equation}
\end{theorem}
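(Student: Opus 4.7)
My approach is a three-stage approximation. First, I time-discretize $V$: partition $[0,T]$ into $N$ intervals $[t_k,t_{k+1}]$ of length $h$ and define $\bar V_k(x):=\tfrac{1}{h}\int_{t_k}^{t_{k+1}} V_s(x)\,ds$, which inherits the Lipschitz and $L^\infty$ bounds of $V$. Second, I use Assumption~\ref{asmp:neura}(2) and the density of $\mathcal{F}_d$ in $C(\mathbb{R}^d;\mathbb{R}^d)$ on compacts to approximate each $\bar V_k$ uniformly on a fixed compact set $\Omega$---chosen large enough to contain all relevant characteristics, which is possible since $V$ is bounded---by a sum $V^k_{\mathrm{NN}}:=\sum_{i=1}^m A_{k,i}\Sigma(W_{k,i}x+\theta_{k,i})\in\mathcal{F}_d$. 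Third, on each subinterval $[t_k,t_{k+1}]$ I realize $V^k_{\mathrm{NN}}$ through the single-activation form of the neural ODE by rapid time-switching: split $[t_k,t_{k+1}]$ into $mM$ equal pieces of length $h/(mM)$ and cycle $M$ times through the controls $(mA_{k,i},W_{k,i},\theta_{k,i})$, $i=1,\ldots,m$; a direct computation shows that the resulting switching field $U^M$ satisfies $\int_{t_k}^{t_{k+1}} U^M_s(x)\,ds = h\,V^k_{\mathrm{NN}}(x)$ exactly, with the intermediate primitive $\int_{t_k}^{t} U^M_s(x)\,ds$ within $O(h/M)$ of $(t-t_k)V^k_{\mathrm{NN}}(x)$.

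The first two stages produce a piecewise-constant-in-time vector field $\tilde V^\epsilon$ that is uniformly close to $V$ on $\Omega$ (in an integral-in-$t$ sense for the averaging step, and in sup-norm on $\Omega$ for the neural-net approximation). Standard Gronwall estimates comparing characteristics, combined with the Kantorovich bound $W_2\big((\Phi^V_t)_\#\mu_0,(\Phi^{\tilde V^\epsilon}_t)_\#\mu_0\big)^2\le\int|\Phi^V_t(x)-\Phi^{\tilde V^\epsilon}_t(x)|^2\,d\mu_0(x)$, then give $\sup_t W_2(\mu_t,\tilde\mu_t^\epsilon)\le\epsilon/2$ provided $h$ and the stage-two uniform approximation error are small enough.

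The third stage is the main obstacle. The instantaneous amplitude of the switching field $U^M$ is $m\max_i\|A_{k,i}\|\,\|\Sigma\|_\infty$, \emph{independent of $M$}, so $U^M$ does not converge pointwise (or in $L^1_t$-norm) to $V^k_{\mathrm{NN}}$; only the primitives converge. The essential analytic content is therefore a flow-stability lemma adapted to this weak (integral) convergence: writing
$X^M_t-X_t=\int_0^t\!\![U^M_s(X^M_s)-U^M_s(X_s)]\,ds+\int_0^t\!\![U^M_s(X_s)-V^k_{\mathrm{NN}}(X_s)]\,ds,$
I bound the first integrand by Gronwall using the equi-Lipschitz constant of $\{U^M\}$, which is at most $m\max_i\|A_{k,i}\|\,K\max_i\|W_{k,i}\|$ and hence $M$-independent once $m$ is fixed; and I control the second by partitioning $[0,t]$ into windows of the oscillation scale $h/M$, freezing the slow-varying $X_s$ on each window (Lipschitz error $O(h/M)$ per window, $O(1)\cdot(h/M)$ in total), and applying the uniform primitive convergence. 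Taking $h$ small, then $m$ large (which fixes the equi-Lipschitz constant), and finally $M$ large yields the claimed $\sup_t W_2(\mu_t,\mu_t^\epsilon)\le\epsilon$ by the triangle inequality; the order of parameter choices is critical, as each later constant depends on the earlier ones through the Gronwall exponent.
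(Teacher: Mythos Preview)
Your three-stage decomposition---time-averaging, then uniform approximation of each piece by an element of $\mathcal{F}_d$, then rapid periodic switching to realize the finite sum through the single-activation form---is exactly the architecture of the paper's proof (its Propositions~\ref{thm:Linfap}, \ref{thm:apprne}, and~\ref{prop:peri}, chained via the triangle inequality). The difference lies in how you justify convergence at each stage. The paper argues softly: it shows the approximating fields converge weakly in $L^1(0,T;\mathbb{R}^d)$ pointwise in $x$, invokes an external flow-stability result under weak convergence of drifts (Pogodaev's lemma) to get pointwise-in-$t$ convergence of the pushforwards, then upgrades to uniform-in-$t$ convergence via equi-Lipschitzness of the measure curves (Lemma~\ref{thm:Lip}) and Arzel\`a--Ascoli. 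You instead work quantitatively, proving the flow-stability directly by splitting the error integral, freezing the slow variable on oscillation windows, and applying Gronwall with the $M$-independent equi-Lipschitz constant. Your route is more elementary and gives explicit control over the parameter hierarchy $(h,m,M)$; the paper's route is shorter on the page but outsources the analytic core.

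One minor caution: you write the instantaneous amplitude of $U^M$ as $m\max_i\|A_{k,i}\|\,\|\Sigma\|_\infty$, but $\|\Sigma\|_\infty$ is infinite for ReLU, which is explicitly covered by Assumption~\ref{asmp:neura}. You should replace this by the bound of $\Sigma(W_{k,i}\cdot+\theta_{k,i})$ on the compact set $\Omega$, and then verify---as the paper does via Proposition~\ref{prop:propsu}---that $\Omega$ can be chosen a priori (depending on $r$, $C$, $T$, and the stage-two approximation tolerance $\delta$) so that all the approximating flows remain confined to it. This is a bookkeeping point, not a gap, but the circularity (you need the bound on $\Omega$ to know the flow stays in $\Omega$) needs to be broken by choosing $R$ with $T<(R+r)/(C+\delta)$ before fixing $m$ and the weights.
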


As a consequence, we obtain the following result which
% of the result is the following statement on the final time approximate controllability for the special case of ReLU activation functions and $d \geq 2$ 
was established as~\cite[Theorem~5]{ruiz2021neural}. 

\begin{corollary}
\label{cor:appctr}
\textbf{(Approximate Controllability)}
Suppose that Assumption \ref{asmp:neura} holds and $\mu_0 ,\mu^f\in \mathcal{P}_2(\mathbb{R}^d)$ have compact supports, and are absolutely continuous with respect to the Lebesgue measure. 
Then for every $\epsilon >0$, there exist piecewise constant control inputs  $A^\epsilon(\cdot), W^{\epsilon}(\cdot)$ and $\theta^\epsilon(\cdot)$, such that the corresponding weak solutions $\mu^\epsilon$ of the equation \eqref{eq:neurtra}, satisfy
\begin{equation}
     W_2(\mu_T^\epsilon,\mu^f)  \leq \epsilon.
\end{equation}
\end{corollary}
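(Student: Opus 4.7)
The plan is to derive Corollary~\ref{cor:appctr} directly from Theorem~\ref{thm:MR}. The idea is that, since both $\mu_0$ and $\mu^f$ are absolutely continuous with compact support, one can produce a uniformly bounded, uniformly Lipschitz vector field $V$ whose continuity equation \eqref{eq:cteq} steers $\mu_0$ into an $(\epsilon/2)$-neighborhood of $\mu^f$ in the Wasserstein-$2$ metric. Theorem~\ref{thm:MR} then furnishes piecewise-constant neural-ODE controls whose associated curve tracks the one generated by $V$, and the conclusion follows by the triangle inequality at $t = T$.

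For the construction of $V$, I would first mollify $\mu_0$ and $\mu^f$ on a common compact set to obtain smooth, strictly positive densities $\bar\mu_0, \bar\mu^f$ with $W_2(\mu_0, \bar\mu_0)$ and $W_2(\mu^f, \bar\mu^f)$ small. The Dacorogna--Moser theorem then provides a smooth diffeomorphism $\Phi : \mathbb{R}^d \to \mathbb{R}^d$, equal to the identity outside a compact set, satisfying $\Phi_\# \bar\mu_0 = \bar\mu^f$. Connecting the identity to $\Phi$ via a smooth isotopy $\{\Phi_t\}_{t \in [0,T]}$ with $\Phi_0 = \mathrm{id}$ and $\Phi_T = \Phi$, and setting $V_t(x) := \dot\Phi_t(\Phi_t^{-1}(x))$, yields a vector field satisfying Assumption~\ref{asmp:vec}, uniformly bounded in space and time, whose time-$T$ flow coincides with $\Phi$. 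Using that pushforward under a Lipschitz map is $W_2$-Lipschitz, the triangle inequality gives
\[
W_2(\Phi_\# \mu_0,\, \mu^f) \;\leq\; \mathrm{Lip}(\Phi)\, W_2(\mu_0, \bar\mu_0) \,+\, W_2(\bar\mu^f, \mu^f),
\]
which can be made at most $\epsilon/2$ by choosing the mollification scale finely enough relative to $\mathrm{Lip}(\Phi)$.

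Applying Theorem~\ref{thm:MR} with this $V$ and tolerance $\epsilon/2$ yields piecewise-constant inputs $A^\epsilon, W^\epsilon, \theta^\epsilon$ so that the corresponding weak solution $\mu^\epsilon$ of \eqref{eq:neurtra} satisfies $\sup_{t \in [0,T]} W_2(\mu^\epsilon_t, \mu_t) \leq \epsilon/2$. Setting $t = T$ and applying the triangle inequality once more,
\[
W_2(\mu^\epsilon_T,\, \mu^f) \;\leq\; W_2(\mu^\epsilon_T, \mu_T) \,+\, W_2(\mu_T, \mu^f) \;\leq\; \epsilon/2 + \epsilon/2 \;=\; \epsilon,
\]
as desired.

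The main obstacle is the quantitative balance in the construction of $V$: one must choose the mollification scale finely enough that $\mathrm{Lip}(\Phi)\cdot W_2(\mu_0, \bar\mu_0) < \epsilon/4$ even though $\mathrm{Lip}(\Phi)$ typically grows as the mollifiers sharpen. The compactness of supports, together with the fact that one may arrange $\bar\mu_0, \bar\mu^f$ to be uniformly bounded above and below on a fixed enclosing set, makes this feasible through standard quantitative Dacorogna--Moser estimates. The rest of the argument is a direct application of Theorem~\ref{thm:MR} together with the triangle inequality.
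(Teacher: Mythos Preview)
Your proof follows the paper's strategy exactly: produce a uniformly bounded, uniformly Lipschitz vector field $V$ whose continuity-equation solution lands within $\epsilon/2$ of $\mu^f$ at time $T$, invoke Theorem~\ref{thm:MR} with tolerance $\epsilon/2$, and finish with the triangle inequality at $t=T$. The only difference is in how $V$ is obtained. The paper simply cites \cite[Proposition~3.1]{duprez2019approximate}, which furnishes such a $V$ directly; you instead attempt a self-contained construction via mollification, Dacorogna--Moser, and an isotopy.

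That construction, however, has a genuine gap precisely where you flag it. You assert that one may arrange $\bar\mu_0,\bar\mu^f$ to be ``uniformly bounded above and below on a fixed enclosing set'' while keeping $W_2(\mu_0,\bar\mu_0)$ small enough to beat $\mathrm{Lip}(\Phi)$. But $\mu_0,\mu^f$ are only assumed absolutely continuous with compact support; their densities need not lie in $L^\infty$, nor be bounded away from zero on their supports. If $\rho_0\notin L^\infty$ then measures whose densities are bounded by any \emph{fixed} constant $M$ stay a positive $W_2$-distance away from $\mu_0$ (mass that $\mu_0$ concentrates in small sets cannot be matched), so as $\bar\mu_0\to\mu_0$ the $L^\infty$ bound on its density must blow up, and with it the quantitative Dacorogna--Moser control on $\mathrm{Lip}(\Phi)$. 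Whether the product $\mathrm{Lip}(\Phi)\cdot W_2(\mu_0,\bar\mu_0)$ can nonetheless be driven to zero is exactly the nontrivial approximate-controllability statement the paper outsources to \cite{duprez2019approximate}; your appeal to ``standard quantitative Dacorogna--Moser estimates'' does not settle it. The rest of your argument---the application of Theorem~\ref{thm:MR} and the final triangle inequality---is correct and identical to the paper's proof.
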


%\todo{This next paragraph should go, or become 2 lines with no math details} The idea behind the trajectory controllability proof is the following. It is not hard to see that if vector fields $V_n$ to $V$ in some strong sense, then the corresponding solutions of the continuity equation $\mu_n$ converge uniformly to $\mu$. A similar result also also true, if the vector fields $V_N$ are only weakly converging to $V$, the corresponding solutions $\mu$ still strongly converge to solution $\mu$. Given  a vector field $V$, we will first construct  sequence of vector fields $V_n$ that are piecewise constant in time and weakly converging to $V$. Then, we will construct a sequence of vector-fields $A^m(t)\Sigma(W^m(t)x+\theta^m(t))$\margin{What is $m$} that converge weakly to a given piecewise constant approximation $V^n$ of $V$. Using the regularity properties of the solution of the corresponding continuity equation, it will be then shown that the solutions for the approximation vector fields will converge to the solution of the continuity equation for $V$.

In order to prove the main result and its corollary, we will need some preliminary results. The idea behind the proof is that due to Assumption \ref{asmp:neura}, the convex closure of the set of admissible vector fields includes $V$. This is a well-known idea in theory of differential inclusions and relaxed controls \cite{fattorini1999infinite}. These existing results are not directly applicable to \eqref{eq:neurtra}. That being said, we adapt the arguments to prove the above results.

We first observe some regularity properties of the solution of the continuity equation \eqref{eq:cteq}, with respect to the time variable, which will be used later to invoke compactness of certain approximating sequences.
\begin{lemma}
\label{thm:Lip}
Suppose that $\mu_0 \in \mathcal{P}_2(\mathbb{R}^d)$ has compact support and  $V$ is a Borel measurable vector field for which  $\mu \in C([0,T];\mathcal{P}_2(\mathbb{R}^d))$ is a solution of the continuity equation. Suppose there exists $C>0$ such that $|V_t(x) | \leq C $ for $\mu_t$ almost every $ x \in \mathbb{R}^d$, for (Lebesgue) almost every $t \in (0,T)$. Then the curve $\mu$ is Lipschitz:
\begin{equation}
  W_2(\mu_t,\mu_s) \leq K |t-s|,
\end{equation}
for all $0 < t \leq s < T$, where $K$ is a positive constant that depends only on $C$. 
\end{lemma}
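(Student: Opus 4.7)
The plan is to invoke the superposition principle of Ambrosio--Gigli--Savar\'e (see \cite[Theorem 8.2.1]{ambrosio2008gradient}) to lift the continuity equation to a probability measure on the space of trajectories. Since $\mu \in C([0,T];\mathcal{P}_2(\mathbb{R}^d))$ solves \eqref{eq:cteq} and $V$ is bounded $\mu_r$-a.e.\ by $C$ for almost every $r \in (0,T)$, the superposition principle provides a probability measure $\eta$ on $C([0,T];\mathbb{R}^d)$ concentrated on absolutely continuous curves $\gamma$ satisfying the characteristic ODE $\dot{\gamma}(r) = V_r(\gamma(r))$ for a.e.\ $r$, and such that the time-$r$ evaluation map $e_r : \gamma \mapsto \gamma(r)$ pushes $\eta$ to $\mu_r$, i.e.\ $(e_r)_{\#}\eta = \mu_r$ for every $r \in [0,T]$.

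First I would observe that since the distribution of $\gamma(r)$ under $\eta$ is $\mu_r$, and $|V_r(x)| \leq C$ holds for $\mu_r$-a.e.\ $x$, Fubini together with the characteristic ODE implies $|\dot{\gamma}(r)| \leq C$ for a.e.\ $r$ and $\eta$-a.e.\ curve $\gamma$. Integrating along the curve then yields the pointwise Lipschitz estimate $|\gamma(t) - \gamma(s)| \leq C|t-s|$ for all $0 \leq s \leq t \leq T$ and $\eta$-a.e.\ $\gamma$.

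Next I would construct an explicit transport plan between $\mu_s$ and $\mu_t$ by pushing $\eta$ forward through the pair of evaluation maps, namely $\pi := (e_s,e_t)_{\#}\eta \in \Gamma(\mu_s,\mu_t)$. Using the definition of $W_2$ together with the pointwise bound above,
\[
W_2^2(\mu_s,\mu_t) \leq \int_{\mathbb{R}^d \times \mathbb{R}^d} |x-y|^2\, d\pi(x,y) = \int |\gamma(t) - \gamma(s)|^2\, d\eta(\gamma) \leq C^2 |t-s|^2,
\]
and taking square roots gives the desired Lipschitz bound with constant $K = C$.

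The main technical point to verify is the applicability of the superposition principle in this setting. Its integrability hypothesis $\int_0^T \int |V_r(x)|\, d\mu_r(x)\, dr < \infty$ is immediate since $V$ is essentially bounded $\mu_r$-a.e.\ and each $\mu_r$ is a probability measure. The compact support of $\mu_0$ together with the uniform velocity bound also guarantees that the supports of $\mu_r$ remain uniformly bounded in $r$, so that the second moments are uniformly finite and the curve stays in $\mathcal{P}_2(\mathbb{R}^d)$; this is what allows the coupling $\pi$ above to witness the $W_2$ distance legitimately. No further regularity of $V$ is needed, which is essential because in our intended applications $V$ will only be Borel measurable in time.
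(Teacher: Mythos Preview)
Your proof is correct but takes a genuinely different route from the paper's. The paper invokes \cite[Theorem 8.3.1]{ambrosio2008gradient} directly: since $\int_{\mathbb{R}^d} |V_t(x)|^2\, d\mu_t(x)$ is essentially bounded by $C^2$, the curve $\mu$ is absolutely continuous in the metric sense with metric derivative $|\mu'(t)| \leq \big(\int |V_t|^2\, d\mu_t\big)^{1/2} \leq C$, and then $W_2(\mu_t,\mu_s) \leq \int_s^t |\mu'(\tau)|\, d\tau \leq C|t-s|$. You instead invoke the superposition principle \cite[Theorem 8.2.1]{ambrosio2008gradient} to lift the problem to trajectory space and build an explicit coupling $(e_s,e_t)_{\#}\eta$. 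Both arguments are valid and both yield the sharp constant $K=C$; the paper's is a two-line appeal to the metric-derivative machinery, while yours is more constructive and makes the transport plan witnessing the Lipschitz bound visible.

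One minor remark: your final paragraph about compact supports propagating is not needed for the argument. The hypothesis $\mu \in C([0,T];\mathcal{P}_2(\mathbb{R}^d))$ already places each $\mu_r$ in $\mathcal{P}_2$, and the finiteness of $\int |x-y|^2\, d\pi$ follows directly from the pointwise bound $|\gamma(t)-\gamma(s)| \leq C|t-s|$ for $\eta$-a.e.\ $\gamma$, independently of any support considerations.
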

\begin{proof}
From \cite[Theorem 8.3.1]{ambrosio2008gradient}, we know the curve $\mu$ is absolutely continuous in the sense of \cite[Definition 1.1.1]{ambrosio2008gradient}, since $\int_{\mathbb{R}^d}\|V_t(x)\|^2 d\mu_t(x)$ is essentially bounded over $(0,T)$. Moreover, from \cite[Theorem 8.3.1]{ambrosio2008gradient}, the {\it metric derivative} of $\mu$ defined by 
\[|\mu'(t)|:= \lim_{s\rightarrow t} \frac{W_2(\mu_t,\mu_s)}{|t-s|}, \]
is essentially bounded by $ \big (\int_{\mathbb{R}^d}\|V_t\|^2 d\mu_t(x)\big)^{1/2}$ . Since, $|V_t| \leq C $ for $\mu_t$ almost every $\mathbb{R}^d$, for (Lebesgue) almost every $t \in (0,T)$, from \cite[Theorems 1.1.2 and 8.3.1]{ambrosio2008gradient}, we have that $W_2(\mu(t),\mu(s))  \leq  \int^t_s |\mu'(\tau)|d\tau \leq K |t-s|$  for all $0 <t \leq s < T$. This concludes the result.
\end{proof}

Next, we observe some classical properties on the relation between solutions of the  continuity equation  \eqref{eq:cteq} and an associated ODE. This result enables some control on the growth of the support of the solution of the continuity equation, \ke{due to the Caratheodory existence theorem for solutions of ODEs.} This, again, will be used later to establish the compactness of certain sequence of curves on $\mathcal{P}_2(\mathbb{R}^d)$. In what follows, we denote by $B_c(0) := \{x \in \mathbb{R}^d; |x| \leq c \}$ the closed ball of radius $c>0$ centered at the origin.

\begin{proposition}
\label{prop:propsu}
Suppose that Assumption \ref{asmp:vec} holds and $\mu_0 \in \mathcal{P}_2(\mathbb{R}^d)$ has a compact support. If $r,R,C> 0$  are such that ${\rm supp}~ \mu_0 \subset B_r(0)$, $|V_t(x)| <C$ for all $(t,x) \in [0,T] \times B_{R+r}(0)$ and $T <\frac{R+r}{C}$. Then there exists a unique solution $\mu $ to the continuity equation \eqref{eq:cteq}. Additionally, the solution $\mu$ is given by $\mu_t= (X_t)_{\#}\mu_0$ for all $t\in [0,T]$, where $X:[0,T] \times \mathbb{R}^d \rightarrow \mathbb{R}^d$ is such that 
\begin{equation*}
    \frac{dX_t(x)}{dt} = V_t(X_t(x));~~~X_0(x)=x.
\end{equation*}
Moreover, ${\rm supp}~ \mu_t \subset B_{R+r}(0)$  for all $t \in [0,T]$.
\end{proposition}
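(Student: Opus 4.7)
The plan is to use the classical characteristic-flow representation. First, I would observe that Assumption~\ref{asmp:vec} combined with the bound $|V_t(x)|<C$ on $B_{R+r}(0)$ gives the global linear-growth estimate $|V_t(x)|\le C+K|x|$ (apply the Lipschitz condition at the origin, which lies in $B_{R+r}(0)$), so Caratheodory's theorem produces, for each $x\in\mathbb{R}^d$, a unique absolutely continuous solution $t\mapsto X_t(x)$ of $\dot{X}_t(x)=V_t(X_t(x))$, $X_0(x)=x$, on all of $[0,T]$.

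Next I would establish the support containment via an exit-time argument. For $x\in\mathrm{supp}\,\mu_0\subset B_r(0)$, set $\tau:=\inf\{t\in[0,T]:X_t(x)\notin B_{R+r}(0)\}$. While $t<\tau$ the trajectory lies where $|V_t|<C$, so $|X_t(x)|\le r+Ct$; the hypothesis $T<(R+r)/C$ then forces $\tau=T$. Defining $\mu_t:=(X_t)_{\#}\mu_0$ thus gives $\mathrm{supp}\,\mu_t\subset B_{R+r}(0)$ for every $t\in[0,T]$.

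I would then verify that $\mu_t$ solves~\eqref{eq:cteq} weakly. For $\phi\in C_c^\infty([0,T)\times\mathbb{R}^d)$, the absolute continuity of $t\mapsto X_t(x)$ yields
\begin{equation*}
\tfrac{d}{dt}\phi(t,X_t(x)) = \partial_t\phi(t,X_t(x)) + \nabla\phi(t,X_t(x))\cdot V_t(X_t(x))
\end{equation*}
for a.e.\ $t$; integrating in $t$, integrating against $\mu_0$, and applying Fubini (all justified by the support containment and the boundedness of $V$ on $B_{R+r}(0)$) produces the required weak identity, with the boundary contribution at $t=T$ vanishing by the compact temporal support of $\phi$.

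The most delicate part I anticipate is uniqueness, since $V$ is not globally bounded in space. My plan is to invoke a standard uniqueness result for continuity equations with Lipschitz vector fields, e.g.\ the superposition and pushforward characterization in~\cite[Chapter~8]{ambrosio2008gradient}: any weak solution $\tilde{\mu}\in C([0,T];\mathcal{P}_2(\mathbb{R}^d))$ with initial datum $\mu_0$ admits a representation as a pushforward along ODE trajectories, and uniqueness of those trajectories forces $\tilde{\mu}_t=(X_t)_{\#}\mu_0$. The linear-growth estimate obtained in the first step, together with the finite second moment of $\mu_0$, is exactly what is needed to apply these classical results in the present unbounded setting and conclude the proof.
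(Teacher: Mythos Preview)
Your approach is essentially the paper's: build the characteristic flow via Caratheodory, confine trajectories starting in $\mathrm{supp}\,\mu_0$ to $B_{R+r}(0)$, and then appeal to the pushforward representation in \cite[Chapter~8]{ambrosio2008gradient}. The paper is simply more compressed---it gets the flow and the containment in one stroke from the quantitative Caratheodory theorem \cite[Chapter~1, Theorem~1]{filippov2013differential} and then cites \cite[Lemma~8.1.6]{ambrosio2008gradient} for the rest---whereas you unpack the same ingredients into a global linear-growth existence step, an explicit exit-time argument, a direct verification of the weak formulation, and a separate invocation of the superposition principle for uniqueness.
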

\ke{
\begin{proof}
Due to Assumption \ref{asmp:vec}, for each $x \in B_r(0)$, there exists a unique local solution $y(t)$, of the differential equation 
\begin{equation*}
    \frac{dy(t)}{dt} = V_t(y(t));~~~y(0)=x.
\end{equation*}
From the assumption that there exist $r,R,C> 0$ such that $x \in B_r(0)$, $|V_t(x)| <C$ for all $(t,x) \in [0,T] \times B_{R+r}(0)$ and $T <\frac{R+r}{C}$, and Caratheodory's existence theorem on the existence of solutions to ODEs \cite[Chapter 1, Theorem 1]{filippov2013differential}, we can conclude that the solution $y$ of the above ODE is defined over the interval $[0,T]$ and $y(t) \in B_{R+r}(0)$ for all $t \in [0,T]$. Hence, for $\mu_0$ every $x \in \mathbb{R}^d$, the solution of this ODE is well defined over the interval $[0,T]$ and the result then follows from \cite[Lemma 8.1.6]{ambrosio2008gradient}.\end{proof}
}

In the next proposition we prove the straightforward idea that given a vector-field we can approximate the solution of~\eqref{eq:cteq} using piecewise constant in time vector fields.

\begin{proposition}
\label{thm:Linfap}
Suppose that $V$ satisfies Assumption \ref{asmp:vec}, is uniformly bounded in space and time and $\mu_0 \in \mathcal{P}_2(\mathbb{R}^d)$ has a compact support. Then there exists a sequence $ \{V^N\}_{N\in \mathbb{Z}_+} $ of piecewise constant in time vector fields $V^N:[0,T] \times \mathbb{R}^d \rightarrow \mathbb{R}^d$ such that, the sequence of weak solutions $\{\mu^N\}_{N \in \mathbb{Z}_+}$, corresponding to these vector-fields, converges to the weak solution $\mu$ corresponding to the vector field $V$.
\end{proposition}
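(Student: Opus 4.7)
The plan is to construct $V^N$ by time-averaging $V$ over a mesh of shrinking intervals, upgrade the resulting a.e. pointwise convergence to uniform-in-$x$ convergence on a fixed compact set by exploiting the uniform Lipschitz regularity, and finally translate this into Wasserstein convergence of the pushforwards through a Gronwall estimate on the flows.

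Since $|V_t(x)| \leq C$ and $\mathrm{supp}\,\mu_0 \subset B_r(0)$, Proposition~\ref{prop:propsu} (applied after partitioning $[0,T]$ finely enough so that the hypothesis $T \leq (R+r)/C$ holds on each piece, or by choosing $R$ large directly) gives $\mu_t = (X_t)_{\#}\mu_0$ for the flow $X_t$ of $V$, with $\mathrm{supp}\,\mu_t \subset B_{R+r}(0)$ for all $t \in [0,T]$. Set $B := B_{R+r}(0)$. Partition $[0,T]$ into $N$ equal subintervals $I_i^N = [(i-1)T/N,\, iT/N)$ and define
\[
V^N(t,x) := \frac{N}{T}\int_{I_i^N} V(s,x)\, ds \qquad \text{for } t \in I_i^N.
\]
Averaging preserves the uniform bound $|V^N| \leq C$ and the Lipschitz constant $K$ in $x$, so Proposition~\ref{prop:propsu} again yields a unique flow $X^N_t$ with $\mu^N_t = (X^N_t)_{\#}\mu_0$ and $\mathrm{supp}\,\mu^N_t \subset B$ for all $t \in [0,T]$.

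Next I upgrade pointwise-in-$x$ convergence of $V^N$ to uniform convergence on $B$, for a.e. $t$. For each fixed $x$, the one-dimensional Lebesgue differentiation theorem applied to the bounded measurable map $s \mapsto V(s,x)$ gives $V^N(t,x) \to V(t,x)$ for a.e. $t \in [0,T]$. Fix a countable dense subset $\{x_k\} \subset B$ and let $E \subset [0,T]$ be the full-measure set on which $V^N(t,x_k) \to V(t,x_k)$ for every $k$. Using the uniform Lipschitz estimates $|V^N(t,x) - V^N(t,x_k)| \leq K|x-x_k|$ and $|V(t,x) - V(t,x_k)| \leq K|x-x_k|$, a finite $\varepsilon$-net cover of $B$ shows that for each $t \in E$, $\sup_{x \in B}|V^N(t,x) - V(t,x)| \to 0$. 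Since this supremum is uniformly bounded by $2C$, dominated convergence gives
\[
\eta_N := \int_0^T \sup_{x \in B}|V^N(s,x) - V(s,x)|\, ds \;\longrightarrow\; 0.
\]

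The final step is a Gronwall argument on the flows. For any $x_0 \in \mathrm{supp}\,\mu_0$ and $t \in [0,T]$, splitting $V^N(s,X^N_s) - V(s,X_s)$ and using Lipschitzness of $V^N$ in $x$ gives
\[
|X^N_t(x_0) - X_t(x_0)| \leq K\int_0^t |X^N_s(x_0) - X_s(x_0)|\, ds + \eta_N,
\]
so Gronwall's lemma yields $\sup_{t \in [0,T],\, x_0 \in \mathrm{supp}\,\mu_0} |X^N_t(x_0) - X_t(x_0)| \leq e^{KT}\eta_N \to 0$. Using $(X_t, X^N_t)_{\#}\mu_0 \in \Gamma(\mu_t, \mu^N_t)$ as a coupling then gives $\sup_{t\in[0,T]} W_2(\mu_t,\mu^N_t) \leq e^{KT}\eta_N \to 0$. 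The main subtlety, and what distinguishes this from a one-line averaging argument, is the step of upgrading pointwise-in-$x$ convergence to uniform-in-$x$ convergence on $B$; the uniform spatial Lipschitz regularity shared by $V$ and all $V^N$ is precisely what makes this upgrade possible, and without it the Gronwall bound on trajectories could not be closed.
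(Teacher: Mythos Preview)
Your proof is correct, and takes a genuinely different route from the paper's. Both constructions of $V^N$ coincide (time-averages over a uniform mesh), but the convergence arguments diverge. The paper establishes, via Lemma~\ref{lem:wkcon}, only that $t\mapsto V^N_t(x)$ converges to $t\mapsto V_t(x)$ in $L^1(0,T)$ for each fixed $x$, then invokes an external result (\cite[Lemma~2.8]{pogodaev2016optimal}) to deduce pointwise-in-$t$ weak convergence of $\mu^N_t$, upgrades this to $W_2$ convergence via compact supports, and finally uses Lemma~\ref{thm:Lip} together with Arzel\`a--Ascoli to obtain uniformity in $t$. You instead exploit the uniform spatial Lipschitz constant shared by $V$ and all $V^N$ to boost pointwise-in-$x$ convergence to uniform-in-$x$ convergence on the invariant ball $B$, which yields $\eta_N \to 0$ and lets you close a direct Gronwall estimate on the flows. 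This is more elementary and self-contained (no external citation, no compactness argument) and gives the quantitative bound $\sup_t W_2(\mu_t,\mu^N_t)\le e^{KT}\eta_N$. The paper's route, by contrast, is designed to work under mere \emph{weak} $L^1$ convergence of the velocity fields, which is the mechanism actually needed later in Proposition~\ref{prop:peri} where the approximating fields oscillate and strong convergence is unavailable; so while your argument is cleaner here, the paper's softer approach is the one that generalizes to the subsequent step.
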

\begin{proof}
Define $V^N$ by
\begin{equation}
    V^N_t(x) = \begin{cases}
    \frac{N}{T}\int_{\frac{(n-1)T}{N}}^{\frac{nT}{N}} V_\tau(x)d\tau;  t \in [\frac{(n-1)T}{N},\frac{nT}{N})  \\ ~~~~~~~~~~~~~~~~~~{\rm  for }~n=1,..,N-1, \\
    \frac{N}{T}\int_{\frac{(N-1)}{T}}^T V_\tau(x)d\tau ; t \in [\frac{(N-1)T}{N},T],
    \end{cases}
    \nonumber
\end{equation}
for all $x \in \mathbb{R}^d$. By Lemma \ref{lem:wkcon} $t\mapsto V^N_t(x)$ weakly converges to $t\mapsto V_t(x)$ in $L^1(0,1;\mathbb{R}^d)$ for every $x \in \mathbb{R}^d$. Moreover, it is easy to verify that the vector-fields $V^N$ satisfy Assumption \ref{asmp:vec}
Let $X^N$ be the flow corresponding to the vector fields $V^N$, for each $N \in \mathbb{Z}_+$. It follows \cite[Lemma 2.8]{pogodaev2016optimal} that $\mu_t^N = (X_t^N)_{\#}\mu_0$ are converging to $\mu_t = (X_t)_{\#}\mu_0$ in the weak topology of measures, for each $t\in [0,T]$, as $N$ tends to $\infty$. Invoking Proposition \ref{prop:propsu}, that there exists a compact set $\Omega$ such that the supports of $\mu^N_t, \mu_t$ are contained in $\Omega$ for all $t\in [0,T]$ and for all $N \in \mathbb{Z}_+$. Therefore, since convergence in the weak topology is equivalent to the convergence in the $2$-Wasserstein distance for probability measures with compact support \cite[Theorem 6.9]{villani2009optimal}, this implies that $\{\mu^N_t \}_{N \in \mathbb{Z}_+}$ converges to $\mu_t$ in $\mathcal{P}_2(\mathbb{R}^d)$, for each $t \in [0,T]$. Moreover, due to the uniform bound $ |V^N_t(x)| \leq C$, Lemma \ref{thm:Lip} implies that $\{\mu^N\}_{N \in \mathbb{Z}_+}$ are uniformly Lipschitz in the time variable and hence, invoking the Arzelà-Ascoli theorem, there exists a subsequence of $\{\mu^N\}_{N \in \mathbb{Z}_+}$ that is converging to $\tilde{\mu}$ in $C([0,T],\mathcal{P}_2(\mathbb{R}^d))$. But we know that $\{\mu^N_t \}_{N \in \mathbb{Z}_+}$ converges to $\mu_t$ in $\mathcal{P}_2(\mathbb{R}^d)$, for each $t \in [0,T]$. Therefore, it must be that $\tilde{\mu} = \mu$. This concludes the proof.
\end{proof}

\begin{proposition}
\label{thm:apprne}
Suppose that $\mu_0 \in \mathcal{P}_2(\mathbb{R}^d)$ has a compact support, and that $V:[0,T] \times \mathbb{R}^d \rightarrow \mathbb{R}^d$ is uniformly bounded in space and time and satisfies Assumption \ref{asmp:vec}.  Additionally, assume that the vector field is piecewise constant in time. Given Assumption \ref{asmp:neura}, for $N \in \mathbb{Z}_+$, there exist vector fields $ Q^N$ that are piecewise constant and such that  $Q^N_t \in \mathcal{F}_d$ for all $t \in [0,T]$, and the sequence of weak solutions $\{\mu^N\}_{N \in \mathbb{Z}_+}$, corresponding to the vector-fields $\{Q^N \}_{N\in \mathbb{Z}_+}$, converges to the weak solution $\mu$ corresponding to the vector field $V$.

\end{proposition}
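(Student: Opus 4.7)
The plan is to exploit the piecewise-constant-in-time structure of $V$ together with the density property in Assumption \ref{asmp:neura}(2) applied componentwise. Fix a partition $0 = t_0 < t_1 < \cdots < t_M = T$ on which $V$ is constant, so that $V_t = v_k$ on $[t_{k-1}, t_k)$, where each $v_k \in C(\mathbb{R}^d; \mathbb{R}^d)$ is $K$-Lipschitz (by Assumption \ref{asmp:vec}) and satisfies $|v_k(x)| \leq C$. By Proposition \ref{prop:propsu}, the trajectory $X_t(x)$ of the true flow stays in a ball of radius $r + CT$ whenever $x \in B_r(0) \supset {\rm supp}\,\mu_0$. I would then fix a slightly larger ball $B_R(0)$, say $R := r + (C+1)T + 1$, to leave room for perturbed trajectories, and for each $k$ and each $N \in \mathbb{Z}_+$ invoke the density of $\mathcal{F}_d$ in $C(\mathbb{R}^d;\mathbb{R}^d)$ on compact sets to select $g_k^N \in \mathcal{F}_d$ with $\sup_{x \in B_R(0)} |g_k^N(x) - v_k(x)| < 1/N$. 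Setting $Q^N_t := g_k^N$ on $[t_{k-1}, t_k)$ gives a piecewise constant vector field in $\mathcal{F}_d$.

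Next I would verify that the flows of $Q^N$ are well defined on $[0,T]$ and controlled by an a priori ball argument. Each $g_k^N$ is globally Lipschitz (a finite sum of affine maps composed with the Lipschitz map $\Sigma$ given by Assumption \ref{asmp:neura}(1)), so the ODE $\dot{y} = Q^N_t(y)$ with $y(0) = x \in {\rm supp}\,\mu_0$ admits a unique classical solution $X^N_t(x)$ as long as $X^N_t(x) \in B_R(0)$. On this ball we have $|Q^N_t(X^N_t(x))| \leq C + 1/N \leq C+1$, so $|X^N_t(x)| \leq r + (C+1)T$ for all $t$ on the maximal interval of stay, which by continuation is all of $[0,T]$ with the margin built into $R$.

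The main technical step is then the Gronwall estimate. Writing $\dot{X}^N_t - \dot{X}_t = (Q^N_t - V_t)(X^N_t) + (V_t(X^N_t) - V_t(X_t))$, using the uniform approximation bound $1/N$ on $B_R(0)$ and the Lipschitz constant $K$ of $V_t$, one obtains
\begin{equation*}
|X^N_t(x) - X_t(x)| \leq \frac{t}{N} + K \int_0^t |X^N_s(x) - X_s(x)|\, ds,
\end{equation*}
which by Gronwall's lemma yields $|X^N_t(x) - X_t(x)| \leq (T/N)\, e^{KT}$, uniformly in $t \in [0,T]$ and $x \in {\rm supp}\,\mu_0$. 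Invoking the pushforward representation from Proposition \ref{prop:propsu}, $\mu_t = (X_t)_{\#}\mu_0$ and $\mu^N_t = (X^N_t)_{\#}\mu_0$, the coupling $(X^N_t, X_t)_{\#}\mu_0 \in \Gamma(\mu^N_t, \mu_t)$ gives
\begin{equation*}
W_2^2(\mu^N_t, \mu_t) \leq \int_{\mathbb{R}^d} |X^N_t(x) - X_t(x)|^2\, d\mu_0(x) \leq \left(\frac{T\, e^{KT}}{N}\right)^2,
\end{equation*}
so $\sup_{t \in [0,T]} W_2(\mu^N_t, \mu_t) \to 0$, which is the required convergence in $C([0,T];\mathcal{P}_2(\mathbb{R}^d))$.

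The main obstacle is that elements of $\mathcal{F}_d$ need not be globally bounded or share a common Lipschitz constant, so the approximation of $V$ by $Q^N$ is only valid on a fixed compact ball. The remedy is the a priori trapping of the perturbed trajectories in $B_R(0)$, secured by choosing $R$ with enough margin that the linear growth estimate for $X^N_t$ keeps it inside this ball throughout $[0,T]$; once this is in place, the Gronwall estimate and the pushforward argument are routine.
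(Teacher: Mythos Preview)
Your argument is correct, and in fact more quantitative than the paper's. The construction of $Q^N$ is the same, but after that the routes diverge. The paper does not compare the flows $X^N$ and $X$ directly; instead it argues by compactness and identification of the limit: the uniform bound on $Q^N$ over $\Omega$ together with Lemma~\ref{thm:Lip} gives equi-Lipschitzness of $t\mapsto\mu^N_t$, Arzel\`a--Ascoli then yields a subsequential limit $\tilde{\mu}$ in $C([0,T];\mathcal{P}_2(\mathbb{R}^d))$, and one checks that $\tilde{\mu}$ satisfies the weak formulation of \eqref{eq:cteq} for $V$ by passing to the limit under the integral sign (using that $Q^N\to V$ uniformly on $[0,T]\times\Omega$ and dominated convergence), whence $\tilde{\mu}=\mu$ by uniqueness. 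Your Gronwall-on-the-flow approach bypasses compactness entirely, exploits the global Lipschitz property of $V$ explicitly, and produces the rate $\sup_t W_2(\mu^N_t,\mu_t)\le (T/N)e^{KT}$; this is more elementary and more informative here. The paper's compactness/weak-formulation scheme, on the other hand, is the template that also handles the later Proposition~\ref{prop:peri}, where the approximating fields only converge \emph{weakly} in time and no Gronwall estimate is available.
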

\begin{proof} 

Suppose that the support of $\mu_0$ lies in $B_r(0)$ for some $r >0$. Since the vector-field $V$ is uniformly Lipschitz and bounded, the support of $\mu_t$ lies in $B_{R+r}(0)$ for all sufficiently large $R>0$. Choose $R$ such that $T < \frac{R+r}{C+\delta}$ for some $\delta>0$ and the support of $\mu_t$ lies in $B_{R+r}(0)$ for all $t\in [0,T]$. Define $\Omega := B_{R+r}(0) $. 
 By Assumption \ref{asmp:neura}, we can construct approximating vector-fields $Q^N$ such that 
$Q^N$ are piecewise constant in time, for $N \in \mathbb{Z}_+$, $Q^N_t \in \mathcal{F}_d$ for all $t \in [0,T]$, and $\{Q^N\}_{N \in \mathbb{Z}_+}$ strongly converges to $V$ uniformly in time and space on compact sets:
\[\lim_{N \rightarrow \infty } \sup_{(t,x) \in [0,T] \times \Omega} \|V_t(x)-Q^N_t(x)\|_{\infty} =0.\]
and $|Q^N(t,x)| < C+\delta$ for all $(t,x) \in [0,T] \times \Omega$ and all $N \in \mathbb{Z}_+$.
We can conclude $\mu^N_t$ is contained in $\Omega$ for all $t\in [0,T]$ and all $N \in \mathbb{Z}_+$ , due to Proposition \ref{prop:propsu}. Due to the uniform bound on the velocity fields on $\Omega$, Lemma \ref{thm:Lip} implies that $\{\mu^N\}_{N \in \mathbb{Z}_+}$ are uniformly Lipschitz in time.
Therefore, there exists a subsequence of $\{\mu^N\}_{N \in \mathbb{Z}_+}$ that converges to a limit $\tilde{\mu}$ in $C([0,T];\mathcal{P}_2(\mathbb{R}^d))$. Next, we will verify that $\tilde{\mu}$ is the weak solution of the continuity equation \eqref{eq:cteq} corresponding to the curve $V$. Let $\phi \in C^{\infty}([0,T) \times \mathbb{R}^d)$ be a compactly supported function. Since the supports of $\mu^N$ and $\tilde{\mu}$ are contained in the compact set $\Omega$, 
\begin{eqnarray}
\int_0^T \int_{\mathbb{R}^d} \bigg (\frac{\partial \phi(t,x)} {\partial t} + \nabla \phi (t,x) \cdot Q^N_t(x)\bigg) d \mu^N_t(x) dt  \nonumber \\
-\int_0^T \int_{\mathbb{R}^d} \bigg (\frac{\partial \phi(t,x)} {\partial t} + \nabla \phi (t,x) \cdot V_t(x) \bigg) d \tilde{\mu}_t(x) dt = \nonumber
\end{eqnarray}
\begin{eqnarray}
\int_0^T \int_{\Omega} \bigg (\frac{\partial \phi(t,x)} {\partial t} + \nabla \phi (t,x) \cdot Q^N_t(x) \bigg) d \mu^N_t(x) dt  \nonumber \\
-\int_0^T \int_{\Omega} \bigg (\frac{\partial \phi(t,x)} {\partial t} + \nabla \phi (t,x) \cdot V_t(x) \bigg) d \tilde{\mu}_t(x) dt.
\label{eq:conv1}
\end{eqnarray}

Since $\{Q^N\}_{N \in \mathbb{Z}_+}$ is uniformly converging to $V$ on $[0,T] \times \Omega$, we can conclude that the terms $\bigg (\frac{\partial \phi} {\partial t} + \nabla \phi  \cdot Q^N \bigg)$  are uniformly converging to 
$\bigg (\frac{\partial \phi} {\partial t} + \nabla \phi  \cdot V \bigg)$ on $[0,T] \times \Omega$, as $N$ tends to $\infty$. Moreover, the sequence $ \{\mu^N\}_{N \in \mathbb{Z}_+}$ is converging to $\tilde{\mu}$ in $C([0,T];\mathcal{P}_2(\mathbb{R}^d))$.  By an application of the Dominated Convergence Theorem, \eqref{eq:conv1} converges to $0$ as $N$ tends to $\infty$. This implies that $\tilde{\mu}$ is the weak solution of the continuity equation \eqref{eq:cteq} corresponding to the velocity field $V$ since we conclude that
\begin{eqnarray}
\int_0^T \int_{\mathbb{R}^d} \bigg (\frac{\partial \phi(t,x)} {\partial t} + \nabla \phi (t,x) \cdot V_t(x) \bigg) d \tilde{\mu}_t(x) dt  \nonumber \\
= -\int_{\mathbb{R}^d} \phi(0,x)d\mu_0(x), \nonumber
\end{eqnarray}
for all compactly supported functions $\phi \in C^\infty([0,T) \times \mathbb{R}^d)$. The solution $\mu$ to the continuity equation \eqref{eq:cteq} is unique. Therefore, $\tilde{\mu} = \mu$.
\end{proof}

Next, we show that given a vector field that is a superposition of functions of the form $\Sigma(W\cdot +\theta)$, we can construct an oscillating sequence of admissible vector fields that converge to the superposition weakly in time. 

\begin{lemma}
\label{thm:weaapp}
Let $A_i,W_i \in \mathbb{R}^{d\times d}, \theta_i \in \mathbb{R}^d$ \ke{be weight parameters for $i= 1,...,m$.} For each $N \in \mathbb{Z}_+$. Let $Q^N$ be a $\frac{T}{N}$-periodic vector field defined by 
\begin{equation}
\label{eq:defosc}
\ke{Q_{t+\frac{nT}{N}}(x)} =  m A_i\Sigma(W_ix+\theta_i), ~~ t \in [\frac{iT}{mN},\frac{(i+1)T}{mN}), 
\end{equation}
for all $n \in \{0,...,N-1\}$, $i \in \{ 0,1,...,m-1\}$ and $x \in \mathbb{R}^d$. Then, for each $x \in \mathbb{R}^d$, $t \mapsto Q^N_t(x)$ weakly converges to  $\sum_{i=1}^m A_i\Sigma(W_ix+\theta_i)$  in $L^1(0,T;\mathbb{R}^d)$ for all $x \in \mathbb{R}^d$, as $N$ tends to $\infty$. 

%Moreover, $V_n$ weakly converges to $V$ in the following sense:
%\[\lim_{n \rightarrow \infty} \int_0^T\int_{\Omega} V_n(t,x)\phi(t,x)dxdt = \int_0^T \int_{\Omega} V(t,x)\phi(t,x)dxdt \]
%for all continuous functions $\phi$, for every compact subset $\Omega \subset \mathbb{R}^d$.
\end{lemma}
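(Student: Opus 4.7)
The plan is to prove weak convergence in $L^1(0,T;\mathbb{R}^d)$ by testing against $g \in L^\infty(0,T;\mathbb{R}^d)$ and using a periodic-averaging argument. Fix $x \in \mathbb{R}^d$, and set
\[
V(x) := \sum_{i=1}^m A_i \Sigma(W_i x + \theta_i).
\]
The first and most important step is the elementary observation that, by construction, the average of $t \mapsto Q^N_t(x)$ over any single period of length $T/N$ equals exactly $V(x)$: each of the $m$ sub-intervals contributes its value $m A_i \Sigma(W_i x + \theta_i)$ weighted by its length $\tfrac{T}{mN}$, and the factor $m$ in \eqref{eq:defosc} cancels against the $\tfrac{1}{m}$ fractional length, leaving precisely $\tfrac{T}{N} V(x)$. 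Simultaneously I record the trivial uniform bound $|Q^N_t(x)| \leq m \sum_{i=1}^m |A_i| \, |\Sigma(W_i x + \theta_i)| =: C(x)$, valid for all $t \in [0,T]$ and all $N$.

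The second step reduces weak convergence to convergence of integrals against characteristic functions of intervals. For $g = \chi_{[a,b]}$, I write $[a,b]$ as the disjoint union of the complete periods $[nT/N,(n+1)T/N)$ it contains together with two boundary remainders of total Lebesgue measure at most $2T/N$. By the first step, each complete period contributes exactly $(T/N)\,V(x)$ to $\int_a^b Q^N_t(x)\,dt$, while the remainder contributes at most $(2T/N)\,C(x)$ in norm. Hence
\[
\int_a^b Q^N_t(x)\,dt \longrightarrow (b-a)\,V(x) = \int_a^b V(x)\,dt
\]
as $N \to \infty$. By linearity, the same convergence holds when testing against any step function.

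The third step extends to arbitrary $g \in L^\infty(0,T;\mathbb{R}^d)$ by a standard density-and-uniform-bound sandwich. Given $\epsilon>0$, pick a step function $g_\epsilon$ with $\|g-g_\epsilon\|_{L^1} < \epsilon$ (using that step functions are dense in $L^1(0,T;\mathbb{R}^d)$ and $g \in L^\infty \subset L^1$ on a bounded interval). Then
\[
\Bigl|\int_0^T g(t)\cdot\bigl(Q^N_t(x)-V(x)\bigr)\,dt\Bigr| \leq \Bigl|\int_0^T g_\epsilon(t)\cdot\bigl(Q^N_t(x)-V(x)\bigr)\,dt\Bigr| + 2C(x)\,\epsilon.
\]
The first term tends to zero by the second step, so the limit superior is bounded by $2C(x)\,\epsilon$; letting $\epsilon \to 0$ concludes the proof.

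The argument is essentially a Riemann–Lebesgue-type computation for piecewise constant oscillations, so there is no serious obstacle; the only point requiring a little care is the bookkeeping in step two showing that the incomplete boundary periods do not spoil the averaging, which is handled by the uniform bound $C(x)$ together with the shrinking remainder measure $2T/N$.
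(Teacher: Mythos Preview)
Your proof is correct. Both your argument and the paper's hinge on the same key observation: the time-average of $Q^N_\cdot(x)$ over any full period equals $V(x)$. The paper simply records this identity and then invokes a standard result on weak convergence of rapidly oscillating periodic functions to their mean (specifically \cite[Theorem 8.2]{chipot2009elliptic}, noting that boundedness lets one work in any $L^p$). You instead reprove this classical fact from scratch via the three-step scheme: exact averaging on full periods, negligible boundary remainder of measure $\leq 2T/N$ controlled by the uniform bound $C(x)$, and extension from step functions to all of $L^\infty$ by $L^1$-density. Your route is more elementary and self-contained; the paper's is shorter but relies on an external reference. Substantively there is no difference in the underlying mechanism.
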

\begin{proof}
We note that $\ke{\frac{1}{T}\int_0^T  Q^N_t(x)} = \sum_{i=1}^mA_i\sigma(W_ix+\theta_i) $ for all $x \in \mathbb{R}^d$ and all $N \in \mathbb{Z}_+$. The weak convergence of \ke{$t \mapsto Q^N_t(x)$} to $\sum_{i=1}^m A_i\Sigma(W_ix+\theta_i)$  in $L^1(0,T;\mathbb{R}^d)$, for each $x \in \mathbb{R}^d$, as $N$ tends to $\infty$, follows from \cite[Theorem 8.2]{chipot2009elliptic}. Note that the latter result is stated of functions that are $p$-integrable for $p>1$. However, since $\sum_{i=1}^m A_i\Sigma(W_i x+\theta_i)$ is essentially bounded, the result applies. 
\end{proof}

\begin{proposition}
\label{prop:peri}
Let $\mu_0 \in \mathcal{P}_2(\mathbb{R}^d)$ have a compact support. 
Suppose Assumption \ref{asmp:neura} holds. 
\ke{Let $Q :[0,T]\times \mathbb{R}^d \rightarrow \mathbb{R}^d$} be a piecewise constant in time vector field  such that \ke{$Q_t \in \mathcal{F}_d $} for all $t\in [0,T]$.
Then there exist vector fields \ke{$Q^N:[0,T]\times \mathbb{R}^d \rightarrow \mathbb{R}^d$} that are of the form of the right hand side of~\eqref{eq:node} for piecewise constant controls $A^N(\cdot) ,W^N(\cdot), \theta^N(\cdot)$ such that the sequence of solutions $ \{\mu^N\}_{N \in \mathbb{Z}_+}$ of \eqref{eq:neurtra} for these choices of weights, converges to the solution $\mu$, corresponding to the vector field \ke{$Q$}, in $C([0,T];\mathcal{P}_2(\mathbb{R}^d))$. 
\end{proposition}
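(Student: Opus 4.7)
The plan is to first handle the case where $Q$ is time-constant, $Q_t(x) = \sum_{i=1}^m A_i \Sigma(W_i x + \theta_i)$, and then patch over the constancy intervals of the piecewise constant $Q$. In the constant case, I take $Q^N$ to be the $T/N$-periodic oscillating vector field from~\eqref{eq:defosc} of Lemma~\ref{thm:weaapp}: on the subinterval $[nT/N + iT/(mN),\, nT/N + (i+1)T/(mN))$ we have $Q^N_t(x) = (mA_i)\,\Sigma(W_i x + \theta_i)$, which is exactly of the form in~\eqref{eq:node} with piecewise constant controls $A^N(t) = mA_i$, $W^N(t) = W_i$, $\theta^N(t) = \theta_i$.

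To prove $\mu^N \to \mu$ in $C([0,T]; \mathcal{P}_2(\mathbb{R}^d))$, I would follow the Arzel\`a--Ascoli template from Proposition~\ref{thm:apprne}. The $Q^N$ are uniformly bounded and uniformly Lipschitz in $x$ on any ball, with constants depending only on the weights and the radius, so Proposition~\ref{prop:propsu} traps the supports of all $\mu^N_t$ and $\mu_t$ inside a common compact set $\Omega$, and Lemma~\ref{thm:Lip} yields a $W_2$-Lipschitz-in-$t$ modulus for $\mu^N$ that is uniform in $N$. A subsequence $\mu^N \to \tilde{\mu}$ in $C([0,T]; \mathcal{P}_2(\mathbb{R}^d))$ then exists, and it remains to verify that $\tilde{\mu}$ solves the continuity equation weakly with drift $Q$; uniqueness from Proposition~\ref{prop:propsu} would then identify $\tilde{\mu} = \mu$ and promote the subsequential convergence to full convergence.

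The main obstacle lies in passing to the limit in $\int_0^T\!\int \nabla \phi(t,x) \cdot Q^N_t(x) \, d\mu^N_t(x) \, dt$, because $Q^N$ converges only weakly in time. I would split this as $\int\!\int \nabla \phi \cdot Q^N_t \, d(\mu^N_t - \tilde{\mu}_t) \, dt + \int\!\int \nabla \phi \cdot (Q^N_t - Q_t) \, d\tilde{\mu}_t \, dt$. The first piece goes to zero by Kantorovich--Rubinstein duality: the integrand $\nabla \phi \cdot Q^N_t$ is Lipschitz in $x$ with a constant uniform in $N,t$ on $\Omega$, and $\sup_t W_2(\mu^N_t, \tilde{\mu}_t) \to 0$. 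The second piece I would handle by exploiting the exact averaging identity $\int_{nT/N}^{(n+1)T/N} Q^N_t(x)\,dt = (T/N)\,Q(x) = \int_{nT/N}^{(n+1)T/N} Q_t(x)\,dt$, holding cell-by-cell by the construction of $Q^N$ and the time-constancy of $Q$. Partitioning $[0,T]$ into these $N$ periodicity cells and freezing $\nabla \phi(t,\cdot)$ and $\tilde{\mu}_t$ at each cell's left endpoint produces, per cell, an $O(1/N^2)$ error from the time-Lipschitz moduli of $\phi$ and $\tilde{\mu}$, while the frozen integral of $Q^N_t - Q_t$ vanishes identically. Summing over the $N$ cells gives a total error of $O(1/N)$, which vanishes in the limit. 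Together with the straightforward narrow convergence in the $\partial_t \phi$ and initial-datum terms, this yields the weak continuity equation for $\tilde{\mu}$. Finally, the piecewise constant case follows by running this construction on each constancy subinterval of $Q$ independently and concatenating the resulting controls; the averaging identity still holds per cell since each periodicity cell is contained in a single constancy subinterval.
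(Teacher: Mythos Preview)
Your proposal is correct, and the construction of the oscillating fields $Q^N$ and the Arzel\`a--Ascoli compactness step match the paper exactly. The genuine difference is in how the limit is identified. The paper first establishes \emph{pointwise-in-$t$} convergence $\mu^N_t \to \mu_t$ at the ODE level: it invokes \cite[Lemma~2.8]{pogodaev2016optimal}, which says that weak-$L^1$ convergence of the vector fields (for each $x$) implies convergence of the associated flows $X^N_t \to X_t$, hence of the pushforwards; Arzel\`a--Ascoli then upgrades this to uniform-in-$t$ convergence. You bypass the ODE-level lemma entirely and work at the PDE level: you extract a subsequential uniform limit $\tilde\mu$ first, and then identify $\tilde\mu$ as the unique weak solution by passing to the limit in the distributional formulation, exploiting the exact cell-averaging identity $\int_{nT/N}^{(n+1)T/N} Q^N_t\,dt = (T/N)\,Q$ together with the time-Lipschitz regularity of $\phi$ and $\tilde\mu$ to kill the cross term. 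Your route is more self-contained---it avoids importing the Pogodaev flow-convergence result---and makes the role of the periodic averaging structure completely explicit; the paper's route is shorter but treats the weak-to-flow convergence as a black box. Both arguments ultimately rest on the same two ingredients (uniform support/Lipschitz bounds and the averaging property of the oscillation), just assembled in a different order.
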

\begin{proof}
 From Lemma \ref{thm:weaapp} it follows that, for $Q$ given, there exist weakly approximating admissible vector-fields $Q^N$, of the form in the right hand side of \eqref{eq:node}, by repeating the construction in \eqref{eq:defosc} over the time intervals on which $Q$ is constant and concatenating the approximating vector fields. Moreover, from the construction in Lemma \ref{thm:weaapp}, the map $t \mapsto Q_t^N(x)$ weakly converges to $t \mapsto Q_t(x)$, for each $x$, in $L^1(0,T;\mathbb{R}^d)$, as $N$ tends to $\infty$. From \cite[Lemma 2.8]{pogodaev2016optimal}, it follows that $\{\mu^N_t\}_{N \in \mathbb{Z}_+}$ converges to $\mu_t$ in $\mathcal{P}_2(\mathbb{R}^d)$, for each $[0,T]$. From the construction of the weakly converging vector fields $Q^N$ in Lemma \ref{thm:Lip}, the vector fields $Q^N$  are uniform bounded on compact sets and therefore, it follows that the curves $\mu^N$ are uniformly Lipschitz in time. As a result, there exists a subsequence of $\{\mu^N\}_{N \in \mathbb{Z}_+}$ converging in $C([0,T];\mathcal{P}_2(\mathbb{R}^d))$. But we have already established that $\{\mu^N_t\}_{N \in \mathbb{Z}_+}$ converges to $\mu_t$ in $\mathcal{P}_2(\mathbb{R}^d)$, for each $[0,T]$. Therefore, the convergence of $\{\mu^N_t\}_{N \in \mathbb{Z}_+}$ to $\mu_t$ must be uniform in the time variable, and hence $\{\mu^N\}_{N \in \mathbb{Z}_+}$ converges to $\mu$ in $C([0,T];\mathcal{P}_2(\mathbb{R}^d))$. 
\end{proof} 

Now, we are ready to prove our main result
on approximate controllability of \eqref{eq:neurtra} about trajectories of \eqref{eq:cteq}.
~~\\ 
~~

\begin{proof}[Proof of Theorem \ref{thm:MR}]
 The result follows by applying Proposition \eqref{thm:Linfap} to approximate $V$ using a vector fields that are piecewise constant in time and, then using \ke{Proposition \ref{prop:peri}} to approximate the piecewise constant approximations using vector fields of the form in the right-hand side of \eqref{eq:node}. 
\end{proof}

Finally, owing to an existing result on the approximate controllability of the continuity equation \eqref{eq:cteq} proved in \cite{duprez2019approximate}, we can establish 
approximate controllability of \eqref{eq:neurtra}.

\begin{proof}[Proof of Corollary \ref{cor:appctr}]
According to \cite[Proposition 3.1]{duprez2019approximate}, it is known that, for every $\epsilon>0$, there exists a uniformly bounded vector field $V$ satisfying Assumption \ref{asmp:vec} such that the solution $\mu$ of \eqref{eq:cteq} satisfies $W_2(\mu_T, \mu_f) \leq \epsilon/2$. Then, due to Theorem \ref{thm:MR}, there exist piecewise constant control inputs  $A^\epsilon(\cdot), W^{\epsilon}(\cdot)$ and $\theta^\epsilon(\cdot)$, such that the corresponding weak solutions $\mu^\epsilon$ of the equation \eqref{eq:neurtra}, satisfies
\begin{equation}
     W_2(\mu_T^\epsilon,\mu_T)  \leq \epsilon/2.
\end{equation}
Using the triangle inequality property of the  $W_2$-distance, we can conclude that $W_2(\mu_T^\epsilon,\mu^f)  \leq \epsilon$. This concludes the proof. 
\end{proof}
\section{conclusion}
We demonstrated how neural ODEs can be used to approximate solutions of the continuity equation with  a uniformly Lipschitz bounded vector field. Interesting future directions include extending the result to vector fields that are not Lipschitz, such as those arising from solution of the Benamou-Brenier formulation of optimal transport. Lastly, one could also consider similar approximation results for stochastic and higher order dynamical variants of neural ODEs.

\section*{acknowledgements}
The authors thank Katy Craig and Levon Nurbekyan for many helpful discussions.

\appendix

\newcounter{mycounter}
\renewcommand{\themycounter}{A.\arabic{mycounter}}
\newtheorem{lemmaappendix}[mycounter]{Lemma}

\begin{lemmaappendix}
\label{lem:wkcon}
Let $f \in L^1(0,T;\mathbb{R}^n)$. Suppose that there exists a constant $C >0$ such that $|f(t)| < C$ for almost every $t \in (0,T)$. For each $N \in \mathbb{Z}_+$, consider $f^N \in L^1(0,T;\mathbb{R}^d)$  defined by 
\begin{equation}
    f^N(t) = 
    \frac{N}{T}\int_{\frac{(n-1)T}{N}}^{\frac{(n)T}{N}} f(\tau)d\tau;~t \in [\frac{(n-1)T}{N},\frac{nT}{N}), 
\end{equation}
for $n=1,..,N $. Then the sequence $\{f^N\}_{N\in \mathbb{Z}_+}$ weakly converges to $f$ in $L^1(0,T;\mathbb{R}^d)$ as $N \rightarrow \infty$.
\end{lemmaappendix}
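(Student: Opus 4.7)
The plan is to prove the stronger statement that $f^N \to f$ in the $L^1(0,T;\mathbb{R}^d)$-norm, which immediately implies the required weak convergence since any $g \in L^\infty(0,T;\mathbb{R}^d) = (L^1)^*$ satisfies $\left|\int_0^T \langle f^N - f, g\rangle\,dt\right| \leq \|f^N - f\|_{L^1}\|g\|_{L^\infty}$. The argument is a standard density and approximation scheme relying on two ingredients: a uniform bound on the averaging operator, and direct verification on a dense subclass of test functions.

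First I would observe that the operator $A_N : L^1 \to L^1$ defined by $A_N f = f^N$ is a contraction. Applying Fubini's theorem (or Jensen's inequality componentwise),
\[
\int_0^T |f^N(t)|\,dt = \sum_{n=1}^N \int_{(n-1)T/N}^{nT/N}\left|\frac{N}{T}\int_{(n-1)T/N}^{nT/N} f(\tau)\,d\tau\right|dt \leq \int_0^T |f(\tau)|\,d\tau,
\]
so $\|A_N f\|_{L^1} \leq \|f\|_{L^1}$ uniformly in $N$. Next, for $f \in C([0,T];\mathbb{R}^d)$, uniform continuity on the compact interval yields, for any $\epsilon > 0$ and a corresponding modulus $\delta > 0$, that once $T/N < \delta$ we have $|f^N(t) - f(t)| \leq \frac{N}{T}\int_{(n-1)T/N}^{nT/N}|f(\tau)-f(t)|\,d\tau < \epsilon$ on the $n$-th subinterval. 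Hence $f^N \to f$ uniformly, and therefore in $L^1$.

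Finally, for general $f \in L^1$, I would invoke density of $C([0,T];\mathbb{R}^d)$ in $L^1$: given $\epsilon > 0$, choose continuous $g$ with $\|f-g\|_{L^1} < \epsilon/3$, and decompose
\[
\|f^N - f\|_{L^1} \leq \|A_N(f-g)\|_{L^1} + \|g^N - g\|_{L^1} + \|g-f\|_{L^1} \leq 2\|f-g\|_{L^1} + \|g^N - g\|_{L^1},
\]
which is below $\epsilon$ for $N$ large enough by the contraction bound and the continuous case. There is essentially no obstacle here: the hypothesis $|f(t)| < C$ is not even required for this argument (strong $L^1$ convergence of averages along a refining partition holds for every $L^1$ function, a consequence of Lebesgue differentiation), so we in fact obtain more than what is claimed, and the desired weak $L^1$ convergence follows immediately.
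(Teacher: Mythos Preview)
Your proof is correct and, like the paper, actually establishes strong $L^1$ convergence (from which weak convergence is immediate). The route, however, is genuinely different. The paper argues pointwise: it invokes the Lebesgue differentiation theorem to get $f^N(t)\to f(t)$ for almost every $t$, then uses the hypothesis $|f(t)|<C$ (which forces $|f^N(t)|\le C$ as well) to apply the Dominated Convergence Theorem and conclude $\|f^N-f\|_{L^1}\to 0$. Your argument instead proceeds by an operator-norm bound plus density: you show the averaging map $A_N$ is an $L^1$-contraction, verify convergence directly on $C([0,T];\mathbb{R}^d)$ via uniform continuity, and finish with a standard $3\epsilon$ splitting. The trade-off is that the paper's proof is shorter once one is willing to cite Lebesgue differentiation, but it genuinely needs the uniform bound $C$ to feed into DCT; your argument is slightly longer but entirely elementary and, as you note, dispenses with the boundedness hypothesis altogether, yielding the result for arbitrary $f\in L^1$.
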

\begin{proof}
By the Lebesgue differentiation theorem  \cite[Theorem 2.3.4]{hytonen2016analysis}, $\{f^N(t)\}_{N\in \mathbb{Z}_+}$ converges to $f(t)$ for almost every $t \in (0,T)$. Since $|f(t)|$ and $|f^N(t)|$ are bounded by $C$ for almost every $t \in (0,T)$, it follows from the dominated convergence theorem that 
\[\lim_{N \rightarrow \infty}\int_{0}^T |f(t)-f^N(t)|dt = 0.\] Therefore, $\{f^N\}_{N\in \mathbb{Z}_+}$ converges to $f$ in the strong topology in $L^1(0,T;\mathbb{R}^d) $ and hence, also in the weak topology.
\end{proof}

\addtolength{\textheight}{-12cm}   % This command serves to balance the column lengths
                                  % on the last page of the document manually. It shortens
                                  % the textheight of the last page by a suitable amount.
                                  % This command does not take effect until the next page
                                  % so it should come on the page before the last. Make
                                  % sure that you do not shorten the textheight too much.

%%%%%%%%%%%%%%%%%%%%%%%%%%%%%%%%%%%%%%%%%%%%%%%%%%%%%%%%%%

\bibliographystyle{plain}  
\bibliography{cdc}

\end{document}